\newtheorem{theorem}{Theorem}[section]
\newtheorem{remark}{Remark}[section]
\newtheorem{definition}{Definition}[section]
\newtheorem{lemma}{Lemma}[section]
\numberwithin{equation}{section}
\begin{document}

	\title{Finite Convergence Criteria for Normalized Nash Equilibrium through Weak Sharpness and Linear Conditioning}
	\author{
		Asrifa Sultana\footnotemark[2] , Shivani Valecha\footnotemark[1]~\footnotemark[2] }
	\date{ }
	\maketitle
	\def\thefootnote{\fnsymbol{footnote}}
	
	\footnotetext[1]{ Corresponding author. e-mail- {\tt shivaniv@iitbhilai.ac.in}}
	\noindent
	\footnotetext[2]{Department of Mathematics, Indian Institute of Technology Bhilai, Raipur - 492015, India.
	}
	\maketitle
	\vspace*{-0.6cm}

	\vskip 4mm {\small\noindent {\bf Abstract.}
		The generalized Nash equilibrium problems play a significant role in modeling and analyzing several complex economics problems. In this work, we consider jointly convex generalized Nash games which were introduced by Rosen. We study two important aspects related to these games, which include the weak sharpness property for the set of normalized Nash equilibria and the linear conditioning technique for regularized Nikaido-Isoda function. Firstly, we define the weak sharpness property for the set of normalized Nash equilibria and then, we provide its characterization in terms of the regularized gap function. Furthermore, we provide the sufficient conditions under which the linear conditioning criteria for regularized Nikaido-Isoda function becomes equivalent to the weak sharpness property for the set of normalized Nash equilibria. We show that an iterative algorithm used for determining a normalized Nash equilibrium for jointly convex generalized Nash games terminates finitely under the weak sharpness and linear conditioning assumptions. Finally, we estimate the number of steps needed to determine a solution for the specific type of jointly convex generalized Nash game as an application.
		
		\noindent {\bf Keywords.}
		non-cooperative games; jointly convex generalized Nash equilibrium problem; variational inequality; normalized Nash equilibria; Nikaido-Isoda function. 
		
		\noindent {\bf Mathematics Subject Classification.}
		\textcolor{black}{65K10, 90C30, 91A10}}

	\section{Introduction}
	
	The concept of Nash equilibrium initiated by John F. Nash, was defined for non-cooperative games consisting of finite number of players \cite{nash}. In Nash equilibrium problems (NEP), the feasible strategy set for each player is assumed to be fixed. Later on, Arrow-Debreu \cite{debreu} extended NEP and developed the notion of generalized Nash equilibrium problem (GNEP) in which the strategy sets of players are allowed to rely on the strategies selected by rivals. In recent times, the concept of GNEP has attracted considerable attention due to its applicability in electricity markets \cite{Aussel_Asrifa}, microeconomics \cite{cotrina}, environmental sustainability issues \cite{salas} and many other fields. For historical background on GNEP and a brief description of recent developments in this field, one can refer to an article by Facchinei-Kanzow \cite{facc}.
	
	Let us recall the formal definition of GNEP from \cite{debreu}. Consider the set $\{1,2,\cdots N\}$ containing $N$-players participating in a non-cooperative game. Suppose that player $i$ controls a strategy vector $x_i\in \mathbb{R}^{n_i}$ where $\sum_{i=1}^{N} n_i=n$. Assume that the vector $x=(x_i)_{i=1}^N=(x_{-i},x_i)\in \mathbb{R}^n$ indicates the strategies of all $N$-players, where $x_{-i}\in \mathbb{R}^{n-n_i}$ is strategy vector for all players except $i$. For any strategy vector $x_{-i}$ of rivals, the feasible strategy set of the player $i$ is restricted to $K_i(x_{-i})\subseteq \mathbb{R}^{n_i}$ and he intends to choose the strategy $x_i\in K_i (x_{-i})$ which is solution of the following problem $P_i(x_{-i})$ given as,
	\begin{equation}
		P_i(x_{-i}):\quad \displaystyle\min_{z_i}~\theta_i(x_{-i},z_i)~\text{ subject to}~z_i\in K_i(x_{-i}),\label{Nash}
	\end{equation} 
	considering that $\theta_i:\mathbb{R}^{n}\rightarrow\mathbb{R}$ indicates the loss function of player $i$.
	If the symbol $Sol_i(x_{-i})$ indicates the solution set for $P_i(x_{-i})$, then the vector $\bar x= (\bar x)_{i=1}^N$ is known as \textit{generalized Nash equilibrium} if  $\bar x_i\in Sol_i(\bar x_{-i})$ for each $i=1,2,\cdots N$.
	
	In one of the preliminary works by Rosen \cite{rosen}, an important class of GNEP is discussed, which has been recently termed as ``jointly convex generalized Nash equilibrium problem" \cite{facchinei}. For a given strategy vector $x_{-i}$ of the rivals, the feasible strategy set $K_i(x_{-i})$ of player $i$ in a jointly convex GNEP is given as, 
	\begin{equation}\label{strategyset}
		K_i(x_{-i})=\{x_i~\text{in}~ \mathbb{R}^{n_i}~|\,(x_{-i},x_i)~\text{lies in}~X\},
	\end{equation}
	where $X\subseteq \mathbb{R}^n$ is closed and convex. 
	
	Before the formal introduction of jointly convex GNEP in \cite{facchinei}, the early literature provided the characterization of classical GNEP with respect to a quasi-variational inequality (QVI) problem \cite{facc, faccbook}. But, this characterization is not useful enough because when compared to variational inequality (VI) problems, the literature of QVI is not equipped with well developed solving techniques (see \cite{faccbook} for more details). We recollect that for a given map $F:X \rightarrow \mathbb{R}^n$ (where $X\subseteq \mathbb{R}^n$), the variational inequality problem $VI(F,X)$ corresponds to determine $x^*\in X$ satisfying, 
	$$\langle F(x^*),y-x^*\rangle \geq 0~\text{for each}~y\in X.$$ 
	In the following result, Facchinei et al. \cite{facchinei} stated that under suitable conditions some solutions of jointly convex GNEP can be obtained by solving an associated VI problem,
	\begin{theorem}\cite{facchinei}\label{theoremFacc}
		Suppose $X\subset\mathbb{R}^n$ is non-empty, convex and closed. For any $i$, suppose the function $\theta_i:\mathbb{R}^{n-n_i}\times \mathbb{R}^
		{n_i}\rightarrow \mathbb{R}$ is continuously differentiable and $\theta_i(x_{-i},.)$ is a convex function for each $x_{-i}\in \mathbb{R}^{n-n_i}$. Then any $\bar x\in X$ solving $VI(F,X)$ is an equilibrium for jointly convex GNEP, where $F:\mathbb{R}^n\rightarrow \mathbb{R}^n$ is considered as $F(x)=\prod_{i=1}^{N}(\nabla_{x_i} \theta_i(x))$.
	\end{theorem}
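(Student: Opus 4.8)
The plan is to unpack the variational inequality characterizing $\bar x$ into player-wise optimality conditions, exploiting the product structure of $F$ together with the special form of the strategy sets $K_i$ in the jointly convex setting. Writing $F(x)=(\nabla_{x_1}\theta_1(x),\ldots,\nabla_{x_N}\theta_N(x))$, the defining inequality $\langle F(\bar x),y-\bar x\rangle\geq 0$ for all $y\in X$ expands as $\sum_{i=1}^{N}\langle \nabla_{x_i}\theta_i(\bar x),y_i-\bar x_i\rangle\geq 0$. The goal is then to show that for each fixed player $i$, the block $\bar x_i$ solves $P_i(\bar x_{-i})$, i.e.\ minimizes $\theta_i(\bar x_{-i},\cdot)$ over $K_i(\bar x_{-i})$.

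First I would fix a player $i$ and an arbitrary feasible response $z_i\in K_i(\bar x_{-i})$; by the definition \eqref{strategyset} of the strategy set this means precisely that the vector $y:=(\bar x_{-i},z_i)$ lies in $X$. Substituting this particular $y$ into the variational inequality, every summand with index $j\neq i$ vanishes because $y_j=\bar x_j$, leaving only the $i$-th term. This collapses the global condition to the single scalar inequality $\langle \nabla_{x_i}\theta_i(\bar x),z_i-\bar x_i\rangle\geq 0$, valid for every $z_i\in K_i(\bar x_{-i})$.

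Next I would invoke the convexity of $z_i\mapsto \theta_i(\bar x_{-i},z_i)$ together with differentiability. The first-order characterization of convex differentiable functions gives $\theta_i(\bar x_{-i},z_i)-\theta_i(\bar x_{-i},\bar x_i)\geq \langle \nabla_{x_i}\theta_i(\bar x_{-i},\bar x_i),z_i-\bar x_i\rangle$. Since $\nabla_{x_i}\theta_i(\bar x_{-i},\bar x_i)$ is exactly the $i$-th block of $F(\bar x)$, combining with the inequality from the previous step yields $\theta_i(\bar x_{-i},z_i)\geq \theta_i(\bar x_{-i},\bar x_i)$ for all $z_i\in K_i(\bar x_{-i})$. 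Hence $\bar x_i\in Sol_i(\bar x_{-i})$, and since $i$ was arbitrary, $\bar x$ is an equilibrium of the jointly convex GNEP.

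The only delicate point, and the step I would verify most carefully, is the feasibility of the test vector $y=(\bar x_{-i},z_i)$. In a general GNEP this substitution could fail, since perturbing player $i$'s strategy might push the joint vector outside the coupled constraints; here, however, the jointly convex structure \eqref{strategyset} guarantees that $z_i\in K_i(\bar x_{-i})$ is equivalent to $(\bar x_{-i},z_i)\in X$, so the constructed point is automatically admissible in the variational inequality. This equivalence is precisely what makes the reduction from a single VI to the collection of player-wise optimality conditions go through, and it is where the convexity and closedness of $X$ enter.
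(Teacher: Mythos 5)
Your argument is correct. Note that the paper itself gives no proof of this statement --- it is quoted from Facchinei--Fischer--Piccialli \cite{facchinei} as a known result --- but your proposal is precisely the standard argument for it: test the variational inequality with $y=(\bar x_{-i},z_i)$, which lies in $X$ exactly because $K_i(\bar x_{-i})$ is defined by (\ref{strategyset}), collapse the sum to the $i$-th block, and conclude via the first-order characterization of the convex function $\theta_i(\bar x_{-i},\cdot)$. You correctly identify the feasibility of the test vector as the crux; the only (trivial) point left implicit is that $\bar x_i\in K_i(\bar x_{-i})$ itself, which is immediate from $\bar x\in X$.
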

	Here, $(\nabla_{x_i} \theta_i(x))$ denotes the partial derivative of the function $\theta_i$ w.r.t. variable $x_i$. According to Theorem \ref{theoremFacc}, it is not necessary that any equilibrium of jointly convex GNEP necessarily solves the associated variational inequality $VI(F,X)$. In fact, the equilibrium points for jointly convex GNEP which are obtained by solving $VI(F,X)$ are specifically termed as \textit{normalized Nash equilibrium} (NNE) \cite{facchinei, kanzow}. 
	Several iterative methods have been developed to find normalized Nash equilibrium for jointly convex GNEP over the past decade (see for e.g. \cite{facc, kanzow, pani, fukushima}). Therefore, investigating the conditions under which these iterative methods converge after finite number of steps is an important aspect.
	
	In the available literature, one can find various approaches related to finite termination of iterative methods used for solving optimization problems \cite{kypris, burke, polyak}, variational inequality problems \cite{patrikson,marcotte}, equilibrium problems \cite{moudafi, nguyen} and multi-criteria optimization \cite{bento}. 
	\textcolor{black}{In this direction, Burke-Ferris \cite{burke} studied the notion of weak sharp minima to investigate the finite termination of algorithms used for solving optimization problems with non-unique solution set. It extends the notion of sharp minimum which was studied in \cite{kypris, polyak} for the optimization problems having a unique solution. Later, Patricksson in \cite{patrikson} and Marcotte-Zhu in \cite{marcotte} studied the weak sharpness property for the solution set of variational inequalities in order to derive finite convergence results for iterative methods used in solving variational inequalities. On the other hand, Moudafi \cite{moudafi} developed $\theta$-conditioning criterion for equilibrium problems, which extends and unifies several existing concepts including weak sharp minima for optimization problem. Recently, Nguyen et al. \cite{nguyen} introduced the weak sharpness property for the solution set of equilibrium problems that is equivalent to the 1-conditioning (or linear conditioning) criterion. Moreover, the authors in \cite{moudafi,nguyen} demonstrated the finite termination of an iterative method employed for solving equilibrium problems.} As far as we know, there is no detailed study on the finite termination of iterative methods for solving GNEP using the weak sharpness property or linear conditioning criterion. 
	
	\textcolor{black}{In this work, we bring to the readers' attention two important aspects for the finite convergence analysis of jointly convex GNEP, including the weak sharpness property for the set of normalized Nash equilibria and the linear conditioning technique for regularized Nikaido-Isoda functions. In particular, we first give formal definition of the weak sharpness property for normalized Nash equilibria and then, we provide its characterization with respect to regularized gap function (used for determining NNE).} Furthermore, we provide the suitable conditions under which linear conditioning of regularized Nikaido-Isoda function is equivalent to the weak sharpness property for the set of normalized Nash equilibria. As an application, we obtain sufficient conditions under which any sequence calculated by employing a proximal point algorithm \cite{bento, flam} converges after finitely many iterations to an equilibrium solving the jointly convex GNEP. Finally, we apply our results to estimate the number of steps needed to determine NNE for a specific type of games consisting of $N$-players where the cost function for every player is assumed to be quadratic.
	
	This article is arranged in following sequence: Section \ref{prelim} recalls basic definitions and useful notations which we will use further. We introduce the weak sharpness property for the set of normalized Nash equilibria and give its characterization with respect to regularized gap function in Section \ref{weaksharp}. In Section \ref{linear}, we first provide the sufficient conditions under which linear conditioning of regularized Nikaido-Isoda function is equivalent to the weak sharpness property of the set of NNE. Further, in Section \ref{appl} we use this equivalence to show the finite termination of an iterative algorithm used for determining NNE. Finally, we employ our results to estimate the number of steps needed to determine the normalized Nash equilibrium for some specific type of games.
	
	\section{Preliminaries and Notations}\label{prelim}
	For any non-empty set $X\subseteq \mathbb{R}^n$, the set $X^{\circ}$ indicates polar set (refer \cite{rockfellar}),
	$$X^{\circ}=\{x^*\in \mathbb{R}^n|\,\langle x^*,z\rangle \leq 0~\text{for any}~z\in X\}.$$
	If $X$ is an empty set then $X^{\circ}=\mathbb{R}^n$. The set $N_X(x)$ indicates normal cone \cite{ rockfellar} corresponding to $X\subseteq \mathbb{R}^n$ at any $x\in \mathbb{R}^n$,
	$$N_X(x)= (X-x)^\circ=\begin{cases}
		\{x^*\in\mathbb{R}^n|\,\langle x^*,z-x\rangle \leq 0,~\forall~z\in X\},&\text{if}~x\in \mathbb{R}^n\\\emptyset,&\text{otherwise,}
	\end{cases}
	$$ and the tangent cone \cite{rockfellar} corresponding to $X$ at any $x$ is $T_X(x)=[N_X(x)]^\circ$. Alternatively,
	\begin{align*}
		T_X(x)=\{\,h\in\mathbb{R}^n|\,~\text{for any}~t_k \downarrow 0, \exists~h_k\rightarrow h~\text{s.t.}~x+t_k h_k\in X~ \text{for any}~ k\}. 
	\end{align*}
	
	Suppose $X\subseteq \mathbb{R}^n$ is non-empty closed convex then projection of any $z\in \mathbb{R}^n$ on $X$ is,
	$$P_X(z)=\{x\in X|\,\norm{z-x}=\inf_{w\in X}\norm{z-w}\}.$$
	
	A bifunction $\psi:\mathbb{R}^n\times \mathbb{R}^n\rightarrow\mathbb{R}$ is known as pseudomonotone \cite{nguyen,bigi} on $X\subseteq \mathbb{R}^n$ if we have $\psi(x,z) \leq 0$ whenever $\psi(z,x) \geq 0$ for each couple $x,z\in X$. Suppose $X^*\subset X$, then we call $\psi$ pseudomonotone on the set $X^*$ w.r.t. $X$ (renaming the monotonicity condition assumed in \cite[Definition 1.2]{flam}) if for any $z\in X^*$ and $x\in X$ we have $\psi(x,z)\leq 0$ whenever $\psi(z,x) \geq 0$. We call $\psi$ negatively psuedomonotone if $-\psi$ is psuedomontone.
	
	We have made the following assumptions about the feasible strategy map and loss function of each player $i$ throughout this article.
	\begin{itemize}
		\item[(A1)] the strategy map $K_i$ is considered as per (\ref{strategyset}), where $X\neq \emptyset$ is closed convex set in $\mathbb{R}^n$;
		\item[(A2)] the loss function $\theta_i:\mathbb{R}^{n-n_i}\times \mathbb{R}^
		{n_i}\rightarrow \mathbb{R}$ is continuously differentiable and $\theta_i$ is convex in $x_i$ for each $x_{-i}\in \mathbb{R}^{n-n_i}$.
	\end{itemize}
	Further, we suppose $X^*\subset X$ denotes the set of NNE for the given GNEP. 
	
	Let us recall the concept of \textit{Nikaido-Isoda} (NI) function $\psi:X\times X\rightarrow \mathbb{R}$ (see \cite{NI}),
	\begin{equation*}
		\psi(x,y)= \sum_{i=1}^{N} \big[\theta_i(x_{-i}, x_i)-\theta_i(x_{-i},y_i)\big].
	\end{equation*}
	A gap function $V:X\rightarrow \mathbb{R}$ for determining NNE (see \cite{facc}) can be constructed as follows by using NI function,
	\begin{equation}\label{V}
		V(x)= \sup_{y\in X} \psi(x,y).
	\end{equation}
	The authors in \cite{kanzow} noticed the fact that it is not necessary that the supremum in (\ref{V}) exists at unique $y\in X$ for any given $x\in X$, which results into non-differentiabilty of the gap function $V$. \textcolor{black}{Considering that $a$ is any positive real number, a regularized gap function $V_a$ is introduced in \cite{kanzow} for determining NNE, which becomes differentiable at each point in $X$ under certain conditions. The function $V_a$ is formed as $V_a(x)=\max_{y\in X} \psi_a(x,y)$, where $\psi_a:X\times X\rightarrow \mathbb{R}$ is regularized NI function \cite{kanzow}, }
	\begin{equation*}
		\psi_a(x,y)=\sum_{i=1}^{N} \bigg[\theta_i(x_{-i}, x_i)-\theta_i(x_{-i},y_i)-\frac{a}{2} \norm{x_i-y_i}^2\bigg].
	\end{equation*}
	
	Now, we provide the sufficient conditions under which regularized NI function $\psi_a$ becomes negatively pseudomonotone over the solution set $X^*$. In this regard, we have the below-mentioned result as a modification of \cite[Theorem 2.2]{flam}.
	\begin{lemma} \cite{flam}\label{pseudo}
		Let $X^*\subseteq X$ be a set of normalized Nash equilibria. Suppose that $\psi_a:X\times X\rightarrow \mathbb{R}$ is regularized NI function and $\psi_a(.,y)$ is convex function for any $y\in X$. Then,
		$$\psi_a (x,x^*) \geq 0,\enspace\text{for each}~x^*~\text{in}~ X^*~\text{and}~x~\text{in}~ X.$$ 
	\end{lemma}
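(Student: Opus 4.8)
The plan is to fix an arbitrary $x^*\in X^*$ and study the single-variable map $g(x):=\psi_a(x,x^*)$ over $X$, where only the first argument varies. By the standing hypothesis that $\psi_a(\cdot,y)$ is convex for every $y\in X$ (taking $y=x^*$), the function $g$ is convex, and it is differentiable since each $\theta_i$ is continuously differentiable by (A2). First I would record the trivial normalization $g(x^*)=\psi_a(x^*,x^*)=0$: both the NI part and the quadratic penalty vanish when the two arguments coincide. The goal then reduces to showing $g(x)\ge g(x^*)$ for all $x\in X$.

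The core computation is to identify $\nabla g(x^*)$ with the VI operator $F(x^*)=(\nabla_{x_j}\theta_j(x^*))_{j=1}^N$. Writing $g(x)=\sum_{i=1}^N\theta_i(x_{-i},x_i)-\sum_{i=1}^N\theta_i(x_{-i},x^*_i)-\tfrac{a}{2}\norm{x-x^*}^2$, I would differentiate with respect to each block $x_j$. The quadratic term contributes $-a(x_j-x^*_j)$, which vanishes at $x=x^*$. In the first sum every summand $\theta_i$ depends on $x_j$, whereas in the second sum $\theta_i(x_{-i},x^*_i)$ depends on $x_j$ only for $i\neq j$, since the $i$-th block is frozen at $x^*_i$. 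Evaluating at $x=x^*$, the off-diagonal contributions $\sum_{i\neq j}\nabla_{x_j}\theta_i(x^*)$ occurring in both sums cancel, leaving exactly $\nabla_{x_j}\theta_j(x^*)$ in the $j$-th block. Hence $\nabla g(x^*)=F(x^*)$.

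With this identification the conclusion is immediate. Since $x^*$ is a normalized Nash equilibrium, it solves $VI(F,X)$, so $\langle F(x^*),x-x^*\rangle\ge 0$ for every $x\in X$. Convexity of $g$ gives the subgradient inequality $g(x)\ge g(x^*)+\langle\nabla g(x^*),x-x^*\rangle=\langle F(x^*),x-x^*\rangle\ge 0$, that is, $\psi_a(x,x^*)\ge 0$ for all $x\in X$, as claimed.

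I expect the main obstacle to be the gradient computation, and specifically justifying the cancellation of the off-diagonal derivatives $\nabla_{x_j}\theta_i$ between the two sums. This is precisely the place where the \emph{normalized} structure is used, namely that $F$ collects only the diagonal partials $\nabla_{x_i}\theta_i$; one must keep careful track of which block arguments are held fixed at $x^*_i$ and which vary with $x$, and of the fact that the penalty gradient vanishes on the diagonal. Everything else reduces to the convex first-order inequality once $\nabla g(x^*)=F(x^*)$ is in hand.
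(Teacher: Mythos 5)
Your proof is correct. The paper does not actually prove this lemma---it is imported from Fl{\aa}m--Antipin \cite{flam} as a modification of their Theorem~2.2---so there is no in-text argument to compare against, but your route is the natural one and is fully consistent with the machinery the paper uses elsewhere: the identity $\nabla_x\psi_a(x^*,x^*)=F(x^*)$ that you derive by hand is exactly the identity the authors invoke (via Danskin's theorem and $y^a(x^*)=x^*$) to get $\nabla V_a(x^*)=F(x^*)$ in Theorems~\ref{theorem1} and~\ref{theoremequi}, and indeed inequality (\ref{eqD}) in the proof of Theorem~\ref{theoremequi} is precisely your final convexity step $\psi_a(x,x^*)\geq\langle\nabla_x\psi_a(x^*,x^*),x-x^*\rangle$. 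Your block-wise cancellation of the off-diagonal partials $\nabla_{x_j}\theta_i$ is carried out correctly, the penalty gradient does vanish at $x=x^*$, and the appeal to $x^*$ solving $VI(F,X)$ is legitimate since that is the paper's definition of a normalized Nash equilibrium; the only cosmetic caveat is that convexity of $\psi_a(\cdot,x^*)$ is assumed on $X$ rather than on all of $\mathbb{R}^n$, but the first-order inequality between two points of the convex set $X$ is all you use, so nothing breaks.
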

	\begin{remark}
		One can deduce that $\psi_a$ is negatively pseudomonotone on the set $X^*$ w.r.t. $X$ if $\psi_a(.,y)$ is convex function for any $y\in X$. In fact, for each pair $x^*\in X^*, x\in X$ we already know that $\psi_a (x^*,x)\leq 0$ due to \cite[Definition 3.1]{kanzow}. Further, as per above lemma $\psi_a (x,x^*) \geq 0$ follows if we assume $\psi_a(.,y)$ is convex function for any $y\in X$.
	\end{remark}
	Suppose that $V_a:X\rightarrow \mathbb{R}$ is considered as $V_a(x)=\max_{y\in X}\psi_a(x,y)$ and $y^a(x)$ is the unique maximizer of $\psi_a(x,y)$ for given $x$ in $X$. \textcolor{black}{Then, the following result by Heusinger-Kanzow \cite{kanzow} states that $V_a$ is a differentiable regularized gap function for determining NNE if certain conditions hold.}
	\begin{lemma}\cite{kanzow}\label{lemma1}
		Suppose assumptions (A1) and (A2) holds. Then we have following consequences,
		\begin{itemize}
			\item[a)] $V_a(x)\geq 0$ for each $x\in X$;
			\item[b)] $x^*$ is a NNE iff $x^*\in X$ satisfies $V_a(x^*)=0$;
			\item[c)] $V_a$ being continuously differentiable yields, 
			\begin{align*}
				\nabla V_a(x)=&\sum_{i=1}^{N} \big[\nabla \theta_i(x_{-i}, x_i)-\nabla \theta_i(x_{-i}, y_i^a(x))\big ]-a(x-y^a(x))\\& + \big [\nabla_{x_1}\theta_1(x_{-1},y_1^a(x)),\cdots, \nabla_{x_N}\theta_N(x_{-N},y_N^a(x))\big ]^T.
			\end{align*}
		\end{itemize}
	\end{lemma}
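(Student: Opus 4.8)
The plan is to treat the three claims in order, using only the definitions of $\psi_a$ and $V_a$ together with the identification (from Theorem~\ref{theoremFacc} and the ensuing discussion) of the normalized Nash equilibria with the solutions of $VI(F,X)$, where $F(x)=\prod_{i=1}^{N}(\nabla_{x_i}\theta_i(x))$. Part (a) is immediate: I would test the definition $V_a(x)=\max_{y\in X}\psi_a(x,y)$ against the feasible choice $y=x$, for which a direct substitution gives $\psi_a(x,x)=0$, so that $V_a(x)\ge \psi_a(x,x)=0$ for every $x\in X$.

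For part (b) I would argue both implications through the VI characterization. Assume first that $x^*$ is a NNE, i.e. $\langle F(x^*),y-x^*\rangle\ge 0$ for all $y\in X$. Using the convexity of each $\theta_i$ in its own block (assumption (A2)), the subgradient inequality $\theta_i(x^*_{-i},y_i)\ge \theta_i(x^*_{-i},x^*_i)+\langle \nabla_{x_i}\theta_i(x^*),y_i-x^*_i\rangle$, summed over $i$, yields $\psi_a(x^*,y)\le -\langle F(x^*),y-x^*\rangle-\tfrac{a}{2}\norm{x^*-y}^2\le 0$ for every $y\in X$; combined with part (a) this forces $V_a(x^*)=0$. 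Conversely, if $V_a(x^*)=0$ then, since $\psi_a(x^*,x^*)=0$, the point $y=x^*$ maximizes the (strictly concave) function $\psi_a(x^*,\cdot)$ over $X$, so the first-order optimality condition $\langle \nabla_y\psi_a(x^*,x^*),y-x^*\rangle\le 0$ holds for all $y\in X$. A short computation gives $\nabla_{y_i}\psi_a(x^*,x^*)=-\nabla_{x_i}\theta_i(x^*)$, so this inequality reads $\langle F(x^*),y-x^*\rangle\ge 0$, i.e. $x^*$ solves $VI(F,X)$ and hence is a NNE.

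For part (c) I would invoke a Danskin/envelope-type theorem. The regularization term $-\tfrac{a}{2}\norm{x_i-y_i}^2$ makes $\psi_a(x,\cdot)$ strictly concave and coercive, so the maximizer $y^a(x)$ exists, is unique, and depends continuously on $x$; this is precisely what lets me conclude that $V_a(x)=\psi_a(x,y^a(x))$ is continuously differentiable with $\nabla V_a(x)=\nabla_x\psi_a(x,y)\big|_{y=y^a(x)}$, the partial gradient taken in the first argument only. The remaining work is to evaluate this partial gradient block by block. Differentiating $\sum_i\theta_i(x_{-i},x_i)$ in $x$ produces $\sum_i\nabla\theta_i(x_{-i},x_i)$, the quadratic term contributes $-a(x-y^a(x))$, and the coupling terms $-\theta_i(x_{-i},y_i)$ require care: because $y_i$ is frozen at $y_i^a(x)$, only the dependence through $x_{-i}$ survives, so each such term contributes $-\nabla\theta_i(x_{-i},y_i^a(x))$ with its own $i$-th block removed. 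Reassembling the removed $i$-th blocks produces exactly the correction vector $[\nabla_{x_1}\theta_1(x_{-1},y_1^a(x)),\dots,\nabla_{x_N}\theta_N(x_{-N},y_N^a(x))]^{T}$, and summing the three contributions delivers the stated formula.

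I expect the differentiation bookkeeping in part (c) to be the main obstacle. The nonstandard feature is that $\theta_i$ depends on $x$ both through its own block $x_i$ and through the rivals' block $x_{-i}$, so the naive envelope computation $-\sum_i\nabla\theta_i(x_{-i},y_i^a(x))$ over-counts the own-block derivatives that are absent when $y_i$ is held fixed at the optimum; recognizing this and adding those terms back is exactly what generates the extra summand in $\nabla V_a$. A secondary technical point is justifying the continuous differentiability of $V_a$ itself, namely the well-definedness and continuity of $y^a(\cdot)$, so that the envelope identity is genuinely valid rather than merely formal.
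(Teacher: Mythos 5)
Your proposal is correct. Note that the paper itself offers no proof of this lemma --- it is imported verbatim from Heusinger--Kanzow \cite{kanzow} --- and your argument reproduces the standard one from that source: part (a) by testing $y=x$, part (b) by combining the gradient inequality for $\theta_i(x_{-i},\cdot)$ with the first-order optimality condition for the strictly concave inner maximization (which is legitimate here because the paper identifies NNE with solutions of $VI(F,X)$), and part (c) by Danskin's theorem plus the block-by-block bookkeeping that correctly isolates the extra summand $[\nabla_{x_1}\theta_1(x_{-1},y_1^a(x)),\dots,\nabla_{x_N}\theta_N(x_{-N},y_N^a(x))]^{T}$ arising from freezing $y_i$ in the coupling terms.
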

	\section{Weakly Sharp Set of Normalized Nash Equilibria}\label{weaksharp}
	In \cite{burke}, Burke-Ferris initiated the notion of weak sharp minima to study the finite convergence of several iterative algorithms for convex optimization problem having non-unique solution. It extends the notion of sharp minima, which was studied in \cite{kypris, polyak} for the optimization problems admitting a unique solution. Suppose $f:\mathbb{R}^n\rightarrow \mathbb{R}\cup \{-\infty,\infty\}$ is an extended real-valued function. A set $Y^* \subset \mathbb{R}^n$ is known as set of weak sharp minima for the minimization problem $\min_{x\in Y} f(x)$ if there exists $\beta>0$ satisfying,
	\begin{equation}\label{def}
		f(x)\geq f(x^*)+\beta\,d(x,Y^*),\enspace\forall \,x\in Y~\text{and}~\forall \, x^*\in Y^*.
	\end{equation}

	Later, Pactricksson \cite{patrikson} and Marcotte-Zhu \cite{marcotte} studied the weak sharpness property for the solution set of VI problems. Consider $X\subseteq \mathbb{R}^n$ and a map $F:X \rightarrow \mathbb{R}^n$. Then variational inequality problem $VI(F,X)$ corresponds to determine $x^*\in X$ satisfying, 
	$$\langle F(x^*),y-x^*\rangle \geq 0~\text{for each}~y\in X.$$ According to \cite{patrikson}, the solution set $X^*$ of $VI(F,X)$ fulfills the weak sharpness property if there exists some positive real $\beta$ such that,
	\begin{equation} \label{patrick}
		-F(x^*) +\beta B\subset \bigcap\limits_{x\in X^*}[T_X(x)\cap N_{X^*}(x)]^\circ,~\forall\, x^*\in X^*,
	\end{equation}
	considering that $B$ indicates the unit ball in $\mathbb{R}^n$.
	
	\textcolor{black}{According to Theorem \ref{theoremFacc}, the set of normalized Nash equilibria for convex GNEP is nothing but the set of solutions for the variational inequality problem} $VI(F,X)$, where $F$ is defined as $F(x)=\prod_{i=1}^N({\nabla_{x_i} \theta_i}(x))$. Hence, we define the weak sharpness property for the set of normalized Nash equilibria by following Patricksson \cite{patrikson} and Marcotte-Zhu \cite{marcotte}.
	\begin{definition}
		Let $X^*\subseteq X$ denote the set of normalized Nash equilibria for a given convex $GNEP$. 
		Then the set $X^*$ fulfills weakly sharpness property if there exists some $\gamma >0$ such that,
		\begin{equation}\label{definition}
			-\prod_{i=1}^N({\nabla_{x_i} \theta_i(x^*)})+\gamma B\subseteq \bigcap\limits_{x\in X^*}[T_X(x)\cap N_{X^*}(x)]^\circ,~ \forall\, x^*\in X^*
		\end{equation}
		considering that $B$ indicates the unit ball in $\mathbb{R}^n$.
	\end{definition}

	In \cite{marcotte}, Marcotte-Zhu provided the suitable conditions under which the solution set of $VI(F,X)$ fulfills the weak sharpness property iff there exists some $\beta>0$ satisfying,
	\begin{equation}\label{dual}
		G(y)\geq \beta d(y,X^*),~\text{for any}~y\in X
	\end{equation}
	considering that $G$ denotes dual gap function,
	\begin{equation*}
		G(y)=\sup_{w\in X}\langle F(w),y-w\rangle.
	\end{equation*}
	
	The above-mentioned inequality (\ref{dual}) due to Marcotte-Zhu \cite{marcotte} motivated us to investigate the necessary and sufficient conditions under which the weak sharpness property for the set of NNE can be characterized in terms of a regularized gap function $V_a$ (see Lemma \ref{lemma1} for definition of $V_a$). 
	\begin{theorem}\label{theorem1}
		\textcolor{black}{Suppose that $\psi_a:X\times X\rightarrow \mathbb{R}$ indicates regularized NI function and $\psi_a(.,y)$ is a convex function for any $y\in X$.} Then, the set of normalized Nash equilibria $X^*\subseteq X$ fulfills the weak sharpness property iff there exists $\gamma >0$ satisfying,
		\begin{equation}\label{result}
			\tag{A}
			V_a(x)\geq \gamma \, d(x,X^*)~\text{for any}~x\in X.
		\end{equation}
	\end{theorem}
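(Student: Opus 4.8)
The plan is to exploit three structural facts about the regularized gap function $V_a$ and then reduce both implications to a single geometric inequality. First I would record that, since each $\psi_a(\cdot,y)$ is convex, $V_a(x)=\max_{y\in X}\psi_a(x,y)$ is convex on $\mathbb{R}^n$ and, by Lemma \ref{lemma1}, continuously differentiable; moreover $X^*=\{x\in X:V_a(x)=0\}=X\cap\{V_a\le 0\}$ is closed and convex. Next, for $x^*\in X^*$ the strong concavity of $\psi_a(x^*,\cdot)$ together with $V_a(x^*)=0=\psi_a(x^*,x^*)$ forces the unique maximizer to be $y^a(x^*)=x^*$, so the gradient formula in Lemma \ref{lemma1}(c) collapses to $\nabla V_a(x^*)=F(x^*)$, where $F(x)=\prod_{i=1}^N(\nabla_{x_i}\theta_i(x))$. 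The third and most delicate fact I would prove is that $\nabla V_a$ is constant on $X^*$: because $V_a$ is convex and vanishes on the convex set $X^*$, an elementary comparison of one-sided directional derivatives at two points of $X^*$ shows $\langle\nabla V_a(x_0),w\rangle=\langle\nabla V_a(x_1),w\rangle$ for every direction $w$, so I may write $F^*:=F(x^*)$ unambiguously for all $x^*\in X^*$. With this in hand, unwinding the polar and the ball in (\ref{definition}) shows that the weak sharpness property is equivalent to the existence of $\gamma>0$ with $\langle F^*,v\rangle\ge\gamma\|v\|$ for every $v\in T_X(x)\cap N_{X^*}(x)$ and every $x\in X^*$.

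For the forward implication I would fix $x\in X$ and set $\bar x=P_{X^*}(x)$. Convexity of $X^*$ gives $x-\bar x\in N_{X^*}(\bar x)$, while convexity of $X$ gives $x-\bar x\in T_X(\bar x)$; hence the reformulated weak sharpness inequality applies to $v=x-\bar x$ and yields $\langle F^*,x-\bar x\rangle\ge\gamma\,d(x,X^*)$. Since $V_a$ is convex with $\nabla V_a(\bar x)=F^*$ and $V_a(\bar x)=0$, the subgradient inequality gives $V_a(x)\ge\langle F^*,x-\bar x\rangle\ge\gamma\,d(x,X^*)$, which is (\ref{result}).

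For the converse I would fix $x\in X^*$ and $v\in T_X(x)\cap N_{X^*}(x)$ with $\|v\|=1$, and choose $t_k\downarrow 0$ and $v_k\to v$ with $x_k:=x+t_kv_k\in X$. Because $X^*$ is convex and $v\in N_{X^*}(x)$, the projection of $x+t_kv$ onto $X^*$ is $x$, so $d(x_k,X^*)=t_k(1-o(1))$. Differentiability at $x$ together with $\nabla V_a(x)=F^*$ and $V_a(x)=0$ gives $V_a(x_k)=t_k\langle F^*,v\rangle+o(t_k)$. Feeding both estimates into (\ref{result}), dividing by $t_k$ and letting $k\to\infty$ produces $\langle F^*,v\rangle\ge\gamma$; by positive homogeneity this is exactly the reformulated weak sharpness inequality, which by the first paragraph is the weak sharpness property.

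The main obstacle is the constancy of $\nabla V_a$ on $X^*$ asserted in the first paragraph. Without it the backward direction only delivers the bound at the ``base point'' $x$ of each tangent cone, namely $\langle F(x),v\rangle\ge\gamma\|v\|$, whereas Definition (\ref{definition}) requires a single vector $F(x^*)$ to dominate the entire union $\bigcup_{x\in X^*}[T_X(x)\cap N_{X^*}(x)]$; reconciling these is precisely what the constancy lemma provides, and it is where the convexity hypothesis on $\psi_a(\cdot,y)$ is genuinely used. A secondary technical point to handle with care is the asymptotics $d(x_k,X^*)=t_k(1-o(1))$, which relies on the projection identity $P_{X^*}(x+t v)=x$ valid for $v\in N_{X^*}(x)$ and convex $X^*$.
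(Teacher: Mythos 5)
Your proposal is correct and follows essentially the same route as the paper's own proof: convexity and differentiability of $V_a$, the identity $\nabla V_a(x^*)=F(x^*)$ on $X^*$, the reformulation of the polar inclusion as $\langle F(x^*),v\rangle\geq\gamma\|v\|$ on $T_X(x)\cap N_{X^*}(x)$, the projection-plus-subgradient argument for the forward implication, and a directional limiting argument combined with the constancy of $\nabla V_a$ on $X^*$ (which the paper obtains by citing Zhou--Wang rather than proving inline) for the converse. The only cosmetic difference is that you bound $d(x+t_kv_k,X^*)$ from below via the projection identity $P_{X^*}(x+tv)=x$ for $v\in N_{X^*}(x)$, whereas the paper uses a separating hyperplane through $x^*$ orthogonal to $s$; both give the same estimate.
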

	\begin{proof}
		Consider a function $F:\mathbb{R}^n\rightarrow \mathbb{R}^n$ is formed as $F(x)=\prod_{i=1}^N(\nabla_{x_i} \theta_i(x))$. 
		We aim to prove that (\ref{result}) holds if the set $X^*$ consisting of NNE fulfills the weak sharpness property. According to (\ref{definition}), we obtain some $\gamma>0$ satisfying, 
		\begin{equation}\label{eq4}
			-F(x^*)+\gamma b\in [T_X(x^*)\cap N_{X^*}(x^*)]^\circ
		\end{equation}
		for each $b ~\text{in}~ B$ and $x^*~\text{in}~ X^*$.
		
		Suppose $x^*~\text{in}~ X^*$ is chosen arbitrarily, then one can show that inclusion (\ref{eq4}) holds for $x^*$ iff,
		\begin{equation}\label{eq5}
			\langle F(x^*),z\rangle \geq \gamma \norm{z}~\text{for each}~z\in T_X(x^*)\cap N_{X^*}(x^*).
		\end{equation}
		Clearly, the following inequality can be obtained for each $b\in B$ if it is given that inclusion (\ref{eq4}) holds,
		$$\langle -F(x^*)+\gamma b,z\rangle \leq 0~\text{for each}~z\in T_X(x^*)\cap N_{X^*}(x^*)$$
		Now, one can easily deduce (\ref{eq5}) by taking $b=\frac{z}{\norm{z}}$.
		Conversely, suppose inequality (\ref{eq5}) holds. Then for all $b$ in $B$ and $z$ in  $T_X(x^*)\cap N_{X^*}(x^*)$, it occurs
		\begin{align*}
			\langle-F(x^*) +\gamma b,z\rangle
			\leq -\langle F(x^*),z\rangle +\gamma\norm{z} \leq 0
		\end{align*}
		which shows $-F(x^*)+\gamma b\in [T_X(x^*)\cap N_{X^*}(x^*)]^\circ$ for any $b\in B$.
		
		Now, by following \cite{kanzow} one can easily verify that $V_a$ is a convex  function if $\psi_a(.,y)$ is a convex function for each $y$ in $X$. In fact, for each $x$ in $X$ we always get a unique maximizer since $\psi_a(x,.)$ is strictly concave. Assume that $y^a(x)$ denotes the unique maximizer of $\psi_a(x,.)$ for $x$ in $X$. Suppose $x,w\in X$ and $t\in[0,1]$ are arbitrary, then by assuming $tx+(1-t)w=w_t$ we obtain,
		\begin{align*}
			V_a(w_t)&=\displaystyle \max_{y\in X}\psi_a(w_t,y)\\
			&=\psi_a(w_t,y^a(w_t))\\
			&\leq t\psi_a(x,y^a(w_t))+(1-t)\psi_a(w,y^a(w_t))\\
			&\leq t\psi_a(x,y^a(x))+(1-t)\psi_a(w,y^a(w))\\
			&=tV_a(x)+(1-t)V_a(w).
		\end{align*}
		
		Since $V_a$ is a continuously differentiable function as per Lemma \ref{lemma1}, we observe $\nabla V_a(x^*)=F(x^*)$ for any $x^*\in X^*$. In fact, due to Lemma \ref{lemma1} we already have,
		\begin{equation}\label{derivative}
			\nabla V_a(x)=\sum_{i=1}^{N} [\nabla \theta_i(x_{-i}, x_i)- \nabla \theta_i(x_{-i}, y_i^a(x))]+[\nabla_{x_i}\theta_i(x_{-i},y_i^a(x))]_{i=1}^{N}-a(x-y^a(x))
		\end{equation}
		for any $x\in X$. According to \cite[Prop. 3.4]{kanzow}, a point $x^*$ lies in a set of NNE $X^*$ iff $x^*=y^a(x^*)$. Thus, by equation (\ref{derivative}) we obtain,
		\begin{equation}\label{deri}
			F(x^*)=\nabla V_a(x^*)~\text{for each}~ x^*~\text{in}~ X^*.
		\end{equation}
		
		Now, suppose $x\in X$ be arbitrary. Since the set $X^*$ is closed convex, a unique point $x^*\in X^*$ exists which satisfies $x^*=P_X(x)$ and $(x-x^*)\in T_X(x^*)\cap N_{X^*}(x^*)$. 
		Finally, by using to convexity and differentiability of the function $V_a$ we get,
		\begin{align*}
			V_a(x)&\geq V_a(x^*)+\langle \nabla V_a(x^*),x-x^*\rangle \\
			&=\langle F(x^*),x-x^*\rangle \\
			&\geq \, \gamma \norm{x-x^*}=\gamma d(x,X^*).
		\end{align*}
		Since $V_a(x^*)=0$ (see Lemma \ref{lemma1}), the equality in second step holds due to (\ref{deri}) and last inequality holds due to (\ref{eq5}).
		
		Conversely, suppose that there is some $\gamma>0$ such that the given inequality (\ref{result}) holds for the set of NNE $X^*$. Then, we aim to show that $X^*$ satisfies (\ref{patrick}). Taking $x^*\in X^*$ arbitrarily 
		we observe that $T_X(x^*)\cap N_{X^*}(x^*)=\{0\}$ implies $-F(x^*)+\gamma B \subset [T_X(x^*)\cap N_{X^*}(x^*)]^\circ$. Suppose some non-zero vector $s$ is in $T_X(x^*)\cap N_{X^*}(x^*)$ then $\langle s,s\rangle >0$ and hence, $x^*+s\notin X^*$. This implies, there exists a hyperplane $\mathcal{H}_s$ containing the point $x^*$ and orthogonal to $s$ which separates $x^*+s$ from $X^*$.
		Since $s\in T_X(x^*)$, we obtain a sequence $s_k$ converging to $s$ for any sequence $t_k \downarrow 0$ such that $x^*+t_k s_k\in X$. Furthermore, one can obtain a $\tilde k \in \mathbb{N}$ so that $\mathcal{H}_s$ separates $x^*+t_k s_k$ from $X^*$ for each $k>\tilde k$. Hence, 
		\begin{align}\label{disthyper}
			d(x^*+t_k s_k,X^*)\geq d(x^*+t_k s_k,\mathcal{H}_s)
			=\frac{t_k\langle s,s_k\rangle}{\norm{s}},\enspace\forall~k>\tilde k.
		\end{align}
		Since $V_a(x)\geq \gamma d(x,X^*)$ for each $x$ in $X$ and $V_a(x^*)=0$ for each $x^*$ in $X^*$, we observe that following condition holds in the view of inequality (\ref{disthyper}), 
		\begin{align*}
			\frac{V_a(x^*+t_k s_k)-V(x^*)}{t_k}\geq\gamma \frac{\langle s,s_k\rangle}{\norm{s}},\enspace\forall~k>\tilde k.
		\end{align*}
		Now, we obtain $\langle\nabla V_a(x^*),s\rangle \geq \gamma \norm{s}~\text{for any}$ $0\neq s\in T_X(x^*)\cap N_{X^*}(x^*)$ by using differentiability of the function $V_a$.
		Eventually, $\langle\gamma b-\nabla V_a(x^*),s\rangle\leq 0$ is fulfilled for any $b\in B, s\in T_X(x^*)\cap N_{X^*}(x^*)$. This fact, together with equation (\ref{deri}) implies, 
		\begin{equation} \label{Fx}
			-F(x^*)+ \gamma B\subseteq [T_X(x^*) \cap N_{X^*}(x^*)]^\circ~ \text{for each}~ x^*~\text{in}~ X^*.
		\end{equation}
		
		Since the function $V_a$ is convex and differentiable, we notice $\nabla V_a$ is constant over $X^*$ as per \cite[Lemma 1]{zhou}. Hence, $\nabla V_a(x^*)=C=F(x^*)$ for each $x^*$ in $X^*$ for some $C\in \mathbb{R}$. Finally, one can observe that the fact $F$ is constant over $X^*$ along with the inclusion (\ref{Fx}) yields (\ref{definition}).
		
	\end{proof}
	\begin{remark}
		We observe that Marcotte-Zhu \cite{marcotte} characterized the weakly sharp solution set of VI problems by using a dual gap function. Furthermore, Liu-Wu \cite{liu} studied the similar type of characterization in terms of primal gap function by assuming that the primal gap function is Gateaux differentiable. We provided the error bound in terms of regularized gap function used for determining NNE, which is differentiable according to \cite{kanzow}. In the similar way, one can obtain such type of error bound without any differentiability assumption by considering the regularized primal gap function of VI problems.
	\end{remark}
	
	\section{Linearly Conditioned Regularized Nikaido- Isoda function}\label{linear}
	Moudafi initiated notion of $\theta$-conditioned bifunction in \cite{moudafi}. Using this concept, the author shown that under some suitable conditions the proximal point method converges for equilibrium problem after finite number of steps. Let $T:X\times X\rightarrow \mathbb{R}$ be a bifunction defined on a set $X\subseteq \mathbb{R}^n$. An equilibrium problem corresponds to determine some $x^*\in X$ s.t.,
	$$T (x^*,z)\geq 0,~\text{for every}~z\in X.$$
	Suppose $X^*$ indicates set of solutions for above equilibrium problem. Then, $T$ is known as $\theta$-conditioned \cite{moudafi} for some $\theta >0$, if there is $\gamma >0$ fulfilling,
	$$ -T(x,P_{X^*}(x)) \geq \gamma [d(x, X^*)]^\theta,~\text{for every}~x\in X. $$
	Furthermore, $T$ is said to be linearly conditioned if $\theta=1$.
	
	According to \cite[Definition 3.1]{kanzow}, we know that any point $x^*\in X$ is NNE if,
	\begin{equation}\label{defNI}
		-\psi_a (x^*,y)\geq 0,~\text{for every}~y\in X.
	\end{equation}
	Therefore, regularized NI function becomes linearly conditioned if we have some $\gamma >0$ fulfilling,
	\begin{equation}
		\psi_a (x, P_{X^*}(x))\geq \gamma d(x,X^*),~\text{for every}~x\in X.\label{defn}
	\end{equation} 
	
	\subsection{Relation between Linear Conditioning of Regaularized NI function and Weak Sharpness property for the set of NNE} \hfill \\
	
	\textcolor{black}{In following result, we state the suitable conditions that ensure the linear conditioning} of regaularized Nikaido-Isoda function \cite{moudafi} is equivalent to the weak sharpness property for the set of NNE. 
	\begin{theorem} \label{theoremequi}
		\textcolor{black}{Suppose that $\psi_a:X\times X\rightarrow \mathbb{R}$ indicates regularized NI function and $\psi_a(.,y)$ is a convex function for any $y\in X$.} Then $X^*\subset X$ consisting of NNE fulfills weakly sharpness property iff $\psi_a$ is linearly conditioned. 
	\end{theorem}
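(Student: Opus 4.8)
The plan is to route both implications through the error-bound inequality (\ref{result}) established in Theorem \ref{theorem1}, which already equates the weak sharpness property for $X^*$ with the estimate $V_a(x)\geq \gamma\, d(x,X^*)$ on $X$. Since $V_a(x)=\max_{y\in X}\psi_a(x,y)$ and $P_{X^*}(x)\in X^*\subseteq X$, the quantity $\psi_a(x,P_{X^*}(x))$ appearing in the linear-conditioning inequality (\ref{defn}) is always dominated by $V_a(x)$; the whole argument hinges on controlling the gap between these two quantities in each direction.

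For the implication ``linearly conditioned $\Rightarrow$ weakly sharp'' I would argue directly. Assuming (\ref{defn}) holds with constant $\gamma$, the chain
$$V_a(x)=\max_{y\in X}\psi_a(x,y)\geq \psi_a(x,P_{X^*}(x))\geq \gamma\, d(x,X^*)$$
is immediate for every $x\in X$, because $P_{X^*}(x)\in X$. This is precisely condition (\ref{result}), so Theorem \ref{theorem1} returns the weak sharpness property with the same constant. Thus this direction is essentially free once Theorem \ref{theorem1} is available.

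The converse ``weakly sharp $\Rightarrow$ linearly conditioned'' is the substantive part, since a lower bound on the maximum $V_a(x)$ does not by itself bound the single value $\psi_a(x,P_{X^*}(x))$. Here I would fix $x\in X$, set $x^*=P_{X^*}(x)$, and exploit convexity of $\psi_a(\cdot,x^*)$ together with $\psi_a(x^*,x^*)=0$ to write the subgradient inequality $\psi_a(x,x^*)\geq \langle \nabla_x\psi_a(x^*,x^*),x-x^*\rangle$. The key identity is $\nabla_x\psi_a(x^*,x^*)=F(x^*)$, with $F(x)=\prod_{i=1}^{N}(\nabla_{x_i}\theta_i(x))$: this follows from the gradient formula of Lemma \ref{lemma1} evaluated at $x^*$, using $y^a(x^*)=x^*$ so that the difference and regularization terms cancel, exactly as in (\ref{deri}). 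Since $x^*=P_{X^*}(x)$ is the projection onto the convex set $X^*$, one has $(x-x^*)\in T_X(x^*)\cap N_{X^*}(x^*)$, precisely the projection property already used in Theorem \ref{theorem1}; hence the weak-sharpness cone inequality (\ref{eq5}) applies to $z=x-x^*$ and yields $\langle F(x^*),x-x^*\rangle\geq \gamma\norm{x-x^*}=\gamma\, d(x,X^*)$. Chaining the three bounds gives $\psi_a(x,P_{X^*}(x))\geq \gamma\, d(x,X^*)$, which is (\ref{defn}).

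The main obstacle is this converse direction, and within it the identification $\nabla_x\psi_a(x^*,x^*)=F(x^*)$: one must verify, via Danskin's theorem or the explicit formula in Lemma \ref{lemma1}, that the partial gradient of the regularized Nikaido--Isoda function in its first slot, evaluated on the diagonal over $X^*$, coincides with the variational-inequality map $F$, so that the convexity lower bound can be fed into (\ref{eq5}). Everything else reduces to the projection property of $P_{X^*}$, the vanishing of $\psi_a$ on the diagonal, and the equivalence already furnished by Theorem \ref{theorem1}.
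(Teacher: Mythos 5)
Your proposal is correct and follows essentially the same route as the paper: the ``linearly conditioned $\Rightarrow$ weakly sharp'' direction via $V_a(x)\geq\psi_a(x,P_{X^*}(x))$ and Theorem \ref{theorem1}, and the converse via the subgradient inequality for the convex function $\psi_a(\cdot,x^*)$, the identification $\nabla_x\psi_a(x^*,x^*)=F(x^*)=\nabla V_a(x^*)$ (Danskin / Lemma \ref{lemma1} with $y^a(x^*)=x^*$), and the cone form (\ref{eq5}) of weak sharpness applied to $z=x-P_{X^*}(x)$. The only difference is the order in which the three inequalities are chained, which is immaterial.
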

	\begin{proof}
		Assume that $X^*$ fulfills the weak sharpness property. Then by (\ref{definition}),
		$$ \gamma B-F(x^*) \subseteq [T_X (x^*)\cap N_{X^*} (x^*)]^\circ, ~\text{for each}~x^*~\text{in}~  X^*.$$
		As per proof of Theorem \ref{theorem1}, one may observe that $F(x^*)= \nabla V_a (x^*)$ for each $x^*\in X^*$. Hence, we obtain
		\begin{equation} \label{eqA}
			\gamma B-\nabla V_a (x^*) \subseteq [T_X (x^*)\cap N_{X^*} (x^*)]^\circ, ~\text{for each}~x^*~\text{in}~ X^*.
		\end{equation}
		Let $x$ be arbitrary point in $X$. Since $X^*$ is a closed and convex set, a unique point $x^*\in X^*$ with $P_{X^*}(x)= x^*$ can be obtained. 
		
		We now claim that $\psi_a$ is linearly conditioned. In fact, on considering $P_{X^*}(x)=x$ this claim follows trivially. Suppose $P_{X^*}(x)\neq x$, then inclusion (\ref{eqA}) along with the fact $(x-x^*)\in T_X(x^*)\cap N_{X^*}(x^*)$ implies,
		$$ \Big \langle \frac{\gamma (x-x^*)}{\norm {x-x^*}}-\nabla V_a (x^*), x-x^*\Big \rangle \leq 0.$$
		This further results into,
		\begin{equation} \label{eqB}
			\gamma \norm{x-x^*}- \langle \nabla V_a (x^*), x-x^*\rangle \leq 0.
		\end{equation} 
		As per \cite[Prop. 3.4]{kanzow}, any point $x^*$ is in $X^*$ iff $x^*=y_a (x^*)$ where $y_a (x^*)$ solves the following maximization problem,
		$$ V_a (x^*)=\max_{y\in X} \psi_a (x^*,y).$$
		Thus, by applying Danskin's theorem \cite{faccbook} for any $x^*$ in $X^*$ it occurs,
		$$ \nabla V_a (x^*)= \nabla_x [\psi_a (x^*, y)]_{(y=y_a(x^*))}$$ 
		where $\nabla_x \psi_a (x^*,y)$ denotes the partial derivative of the function $\psi_a$ w.r.t. $x$ at the point $x=x^*$. Now, we have following due to inequality (\ref{eqB}),
		\begin{equation} \label{eqC}
			\gamma d(x,X^*) \leq \langle \nabla_x \psi_a (x^*,x^*), x-x^*\rangle.
		\end{equation} 
		Since $\psi_a (.,y)$ is convex for every $y\in X$ and $\psi_a(x^*,x^*)=0$, it occurs,
		\begin{align}\label{eqD}
			\langle \nabla_x \psi_a (x^*,x^*), x-x^*\rangle  &\leq \psi_a (x,x^*).
		\end{align}
		Finally, inequality (\ref{eqC}) along with (\ref{eqD}) yields,
		$$ \psi_a (x,P_{X^*}(x)) \geq \gamma  d(x,X^*).$$
		
		Conversely, we assume $\psi_a$ is linearly conditioned. We prove that $X^*$ fulfills the weak sharpness criterion. Suppose $x\in X$ is arbitrary. Since for each $x$ in $X$, a vector $y_a (x)$ exists uniquely which solves,
		$$ V_a (x)= \max_{y\in X} \psi_a (x,y).$$
		Thus, $V_a(x)=\psi_a (x, y_a (x)) \geq \psi_a (x, P_{X^*}(x)) \geq \gamma d(x,X^*)$ due to (\ref{defn}).
		Finally, by using Theorem \ref{theorem1} we affirm that $X^*$ fulfills the weak sharpness condition. 
		
	\end{proof}
	
	\begin{remark}
		The authors in \cite{nguyen} considered an equilibrium problem involving a bifunction $F$. According to \cite[Remark 3]{nguyen}, the monotonicity of the bifunction $F$ is required in order to ensure that $F$ is linear conditioned if the solution set of equilibrium problem fulfills the weak sharpness condition. However, in Theorem \ref{theoremequi} we proved that linear conditioning of regularized NI function $\psi_a$ is equivalent to the weak sharpness property for the set of NNE, where $\psi_a$ is not necessarily monotone. In fact, the regularized NI function $\psi_a$ is not even pseudomonotone on the set $X$. According to Lemma \ref{pseudo}, negatively pseudomonotone over the solution set $X^*$ w.r.t $X$ (refer \cite{bigi} for more information on the various monotonicity properties of bifunctions). 
	\end{remark}
	\section{Finite Termination of an Iterative Algorithm}\label{appl}
	Now, we demonstrate the termination of proximal point algorithm \cite{moudafi,flam} after finite number of steps if the set of NNE fulfills the weak sharpness property (or equivalently regularized NI function is linearly conditioned). For this purpose, we apply the results derived in previous sections. 
	
	As per (\ref{defNI}), one can employ the proximal point algorithm (PPA) for determining the set of NNE. Let $x_0\in X$ be chosen arbitrarily, then the sequence $\{x_k\}_{k\in\mathbb{N}}$ generated by PPA is given as \cite{moudafi,flam},
	\begin{equation}\label{proximal}
		\psi_a (x_{k+1},z)-\frac{1}{r_k} \langle z-x_{k+1}, x_{k+1}-x_k\rangle \leq 0,~\text{for each}~z\in X,  
	\end{equation}
	where the sequence $\{r_k\}_{k\in\mathbb{N}}$ fulfills $r_k>0~\text{for each}~k$ and $\liminf_{k\rightarrow \infty} r_k=r>0$.
	\begin{theorem}\label{finite}
		\textcolor{black}{Suppose that $\psi_a(.,y)$ is a convex function for any $y\in X$. Then, for each sequence $\{x_k\}_{k\in\mathbb{N}}$ generated by (\ref{proximal}), some $k_\circ\in \mathbb{N}$ can be obtained which satisfies $x_k \in X^*$ for all $k> k_\circ$ if one of the following condition holds,}
		\begin{itemize}
			\item[(a)] the regularized NI function $\psi_a$ is linearly conditioned;
			\item[(b)] the set of normalized Nash equilibria $X^*$ fulfills the weak sharpness property.
		\end{itemize}
	\end{theorem}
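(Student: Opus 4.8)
The plan is to exploit the equivalence already established in Theorem \ref{theoremequi}: under the standing convexity hypothesis on $\psi_a(.,y)$, conditions (a) and (b) are equivalent, so it suffices to run the entire argument assuming that $\psi_a$ is linearly conditioned. That is, I may take for granted a constant $\gamma>0$ with
\begin{equation*}
\psi_a(x,P_{X^*}(x))\geq \gamma\, d(x,X^*),\quad \text{for each}~x\in X,
\end{equation*}
as in (\ref{defn}), and derive finite termination purely from this estimate together with the proximal recursion (\ref{proximal}).

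First I would establish that the iterates are Fej\'er monotone with respect to $X^*$ and that successive differences vanish. Fixing any $x^*\in X^*$ and substituting $z=x^*$ in (\ref{proximal}) gives $\psi_a(x_{k+1},x^*)\leq \tfrac{1}{r_k}\langle x^*-x_{k+1},x_{k+1}-x_k\rangle$, while Lemma \ref{pseudo} guarantees $\psi_a(x_{k+1},x^*)\geq 0$. Feeding the polarization identity $2\langle x^*-x_{k+1},x_{k+1}-x_k\rangle=\norm{x^*-x_k}^2-\norm{x^*-x_{k+1}}^2-\norm{x_{k+1}-x_k}^2$ into this chain and multiplying through by $r_k>0$ yields
\begin{equation*}
\norm{x_{k+1}-x^*}^2+\norm{x_{k+1}-x_k}^2\leq \norm{x_k-x^*}^2.
\end{equation*}
Hence $\norm{x_k-x^*}$ is nonincreasing and, summing over $k$, $\sum_k \norm{x_{k+1}-x_k}^2\leq \norm{x_0-x^*}^2<\infty$, so that $\norm{x_{k+1}-x_k}\to 0$.

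The decisive step is to substitute instead $z=P_{X^*}(x_{k+1})$ in (\ref{proximal}). The left-hand side is then bounded below by $\gamma\, d(x_{k+1},X^*)$ via linear conditioning, while the inner product on the right is controlled by Cauchy--Schwarz, namely $\langle P_{X^*}(x_{k+1})-x_{k+1},x_{k+1}-x_k\rangle\leq \norm{P_{X^*}(x_{k+1})-x_{k+1}}\,\norm{x_{k+1}-x_k}=d(x_{k+1},X^*)\,\norm{x_{k+1}-x_k}$. Combining the two bounds and cancelling the common factor $d(x_{k+1},X^*)$, which is legitimate precisely when $x_{k+1}\notin X^*$, gives $\gamma\, r_k\leq \norm{x_{k+1}-x_k}$. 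Since $\liminf_k r_k=r>0$ there is $K$ with $r_k>r/2$ for $k>K$, and since $\norm{x_{k+1}-x_k}\to 0$ the resulting chain $\gamma r/2<\gamma r_k\leq \norm{x_{k+1}-x_k}$ becomes impossible for all large $k$. Therefore $d(x_{k+1},X^*)=0$, i.e.\ $x_{k+1}\in X^*$, for every $k$ beyond some $k_\circ$, which is the asserted finite termination.

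I expect the main obstacle to be the cancellation step: the argument hinges on Cauchy--Schwarz producing \emph{exactly} the factor $d(x_{k+1},X^*)=\norm{P_{X^*}(x_{k+1})-x_{k+1}}$ that the linear-conditioning estimate supplies on the opposite side, so that dividing leaves the clean inequality $\gamma r_k\leq \norm{x_{k+1}-x_k}$. The other delicate point is securing the prerequisite $\norm{x_{k+1}-x_k}\to 0$, which relies essentially on Lemma \ref{pseudo} to convert the one-sided proximal inequality into the Fej\'er relation; without the nonnegativity $\psi_a(x_{k+1},x^*)\geq 0$ the telescoping summability, and hence the final contradiction, would break down.
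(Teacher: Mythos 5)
Your proposal is correct and follows essentially the same route as the paper's proof: reduce (b) to (a) via Theorem \ref{theoremequi}, derive the Fej\'er-type inequality $\norm{x_{k+1}-x^*}^2+\norm{x_{k+1}-x_k}^2\leq\norm{x_k-x^*}^2$ by taking $z=x^*$ in (\ref{proximal}) and invoking Lemma \ref{pseudo}, telescope to get $\norm{x_{k+1}-x_k}\to 0$, and then combine the substitution $z=P_{X^*}(x_{k+1})$ with Cauchy--Schwarz and linear conditioning to force $\gamma r_k\leq\norm{x_{k+1}-x_k}$ whenever an iterate lies outside $X^*$, which is impossible for large $k$ since $\liminf_k r_k>0$. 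The only differences from the paper are cosmetic (indexing and the order in which $k_\circ$ is fixed before the contradiction).
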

	\begin{proof}
		Suppose hypothesis (a) holds. Since $\psi_a$ is linearly conditioned, we obtain some $\gamma>0$ satisfying,
		\begin{equation} \label{eqH}
			\psi_a (x, P_{X^*}(x)) \geq \gamma d(x, X^*).
		\end{equation} 
		
		In order to derive the stated result, we first prove that any sequence $\{x_k\}_{k\in\mathbb{N}}$ generated by PPA (\ref{proximal}) satisfies $\norm{x_k-x_{k+1}}\rightarrow 0$ as $k\rightarrow \infty$. Suppose $z\in X$ is arbitrary then the expression in (\ref{proximal}) can be written as,
		\begin{align*}
			\psi_a (x_{k+1},z)+\frac{1}{2r_k}  \norm{x_k-x_{k+1}}^2+\frac{1}{2r_k} \norm{z-x_{k+1}}^2- \frac{1}{2r_k} \norm{z-x_k}^2 \leq 0.
		\end{align*}
		Suppose $x^*\in X^*$ is arbitrary. Then, by considering $z=x^*$ we get,
		\begin{align} \label{prox1}
			\psi_a (x_{k+1},x^*)+\frac{1}{2r_k} \norm{x_k-x_{k+1}}^2+\frac{1}{2r_k} \norm{x^*-x_{k+1}}^2-\frac{1}{2r_k} \norm{x^*-x_{k}}^2 \leq 0.
		\end{align}
		Since $x^*\in X^*$ is a NNE, we observe $\psi_a(x^*,z)\leq 0$ for any $z$ in $X$ by virtue of (\ref{defNI}). Eventually, we obtain $\psi_a (z,x^*)\geq 0$ for any $z$ in $X$ due to Lemma \ref{pseudo}. Therefore, inequality (\ref{prox1}) yields,
		\begin{equation} \label{ine}
			\norm{x_{k+1}-x_k}^2\leq  \norm{x^*-x_{k}}^2-\norm{x^*-x_{k+1}}^2.
		\end{equation} 
		This further results into,
		$$\sum_{k=0}^\infty \norm{x_{k+1}-x_k}^2\leq \norm{x^*-x_0}^2<\infty.$$
		Hence, $\norm{x_{k+1}-x_k}\rightarrow 0$ as $k\rightarrow \infty$.
		
		We observe that the sequence $\{\frac {1}{r_k}\}_{k\in\mathbb{N}}$ is a bounded sequence as $\liminf_{k\rightarrow \infty} r_k=r>0$. Hence, $\frac{1}{r_k} \norm{x_{k+1}-x_k}\rightarrow 0$ as $k\rightarrow \infty$. Equivalently, for $\gamma >0$ some $k_\circ \in \mathbb{N}$ exists satisfying,
		\begin{equation}\label{eqJ}
			\frac{1}{r_k} \norm{x_{k+1}-x_k} <\gamma ~\,\text{for each}~k\geq k_\circ.
		\end{equation}
		We aim to show that $x_k\in X^*$ for each $k>k_\circ$. Suppose, on contrary $x_{j}\notin X^*$ for some $j>k_\circ$ then $d(x_{j},X^*)\neq 0$ (as $X^*\subset X$ is closed convex). 
		Hence, following occurs due to (\ref{proximal}),
		\begin{equation}
			\psi_a (x_{j}, P_{X^*}(x_{j}))\leq \frac{1}{r_{j-1}} d(x_{j},X^*) \norm{x_{j}-x_{j-1}} .\label{eqI}
		\end{equation} 
		We obtain following inequality on combining (\ref{eqH}) and (\ref{eqI}),
		$$ \norm{x_{j}-x_{j-1}} \geq \gamma r_{j-1}.$$
		But, this contradicts (\ref{eqJ}) as $j-1\geq k_\circ$. Eventually, $x_k$ is in $X^*$ for all $k> k_\circ$.
		
		According to Theorem \ref{theoremequi}, the regularized NI function $\psi_a$ is linearly conditioned iff $X^*$ fulfills the weak sharpness property. Hence, the proof follows in case hypothesis (b) holds.
		
	\end{proof}
	\subsection{Application to the Games with Quadratic Loss Functions}
	\hfill \\
	
	Let us consider a jointly convex GNEP in which the loss function for each player is defined with the help of matrices. We apply the results obtained in previous sections to obtain an estimate for the number of iterations required to determine the normalized Nash equilibria for such games. We prove the below-stated result by following some arguments presented in \cite{nguyen}.
	\begin{theorem}
		Suppose,
		\begin{align*}
			\theta_i(x)=\frac{1}{2} (x_i)^T A_{ii} x_i+\sum_{l=1,l\neq i}^{N} (x_l)^T A_{li} x_i~\forall i=1,2,\cdots N.
		\end{align*}
		where $~x=(x_i)_{i=1}^N \in \mathbb{R}^n$, $A_{li}$ are matrices in $\mathbb{R}^{n_l\times n_i}$ and $A_{ii}$ are symmetric matrices with $\sum_{i=1}^N n_i=n$. Let us consider a positive definite matrix $C$ defined as,
		$$ C=\begin{bmatrix}
			\frac{1}{2} A_{11} & A_{12} & \cdots A_{1N}\\
			A_{21} & \frac{1}{2} A_{22} & \cdots A_{2N}\\
			\vdots\\
			A_{N1} & A_{N2} & \cdots \frac{1}{2} A_{NN}\\
		\end{bmatrix}.$$
		Suppose $\delta >0$ denotes the smallest eigen value for $C+C^T$. If the set of normalized Nash equilibria fulfills the weak sharpness condition, then by considering $a\in (0,\delta]$ and $\frac{1}{r_k}\in (0,\epsilon)$ for some $\epsilon>0$, we observe that the PPA in (\ref{proximal}) converges in $k_\circ+1$ iterations for some initial point $x_\circ$ in $X$ where, 
		$$ k_\circ \leq \frac{{d(x_\circ, X^*)}^2 \epsilon ^2}{\gamma^2}.$$
	\end{theorem}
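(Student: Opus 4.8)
The plan is to reduce this statement to the finite-termination machinery of Theorem \ref{finite} together with an explicit counting argument, after first checking that the quadratic data genuinely satisfies the standing convexity hypothesis ``$\psi_a(\cdot,y)$ convex for every $y$'' that all the earlier results require. So the proof splits into two essentially independent parts: a verification step that justifies invoking the previous theorems for this particular game, and a quantitative step that converts the qualitative finite convergence into the bound $k_\circ \leq d(x_\circ,X^*)^2\epsilon^2/\gamma^2$.

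For the verification step I would first compute the aggregate cost. A direct block expansion shows that $\sum_{i=1}^N \theta_i(x_{-i},x_i)=x^T C x$: the diagonal blocks $\tfrac12 A_{ii}$ reproduce the quadratic self-terms $\tfrac12 x_i^T A_{ii}x_i$, and the off-diagonal blocks $A_{ij}$ reproduce the coupling terms $\sum_{l\neq i} x_l^T A_{li}x_i$ after relabeling the dummy indices. Next, since $\theta_i(x_{-i},y_i)=\tfrac12 y_i^T A_{ii}y_i+\sum_{l\neq i} x_l^T A_{li}y_i$ depends on $x$ only through the linear terms in $x_l$, the map $x\mapsto\sum_i\theta_i(x_{-i},y_i)$ is affine for fixed $y$ and hence does not affect convexity. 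Consequently the Hessian in $x$ of $\psi_a(x,y)=x^TCx-[\text{affine in }x]-\tfrac a2\norm{x-y}^2$ equals $C+C^T-aI$. This is positive semidefinite exactly when $a\le\lambda_{\min}(C+C^T)=\delta$, so the hypothesis $a\in(0,\delta]$ is precisely what makes $\psi_a(\cdot,y)$ convex; together with $A_{ii}\succeq 0$ from assumption (A2) and $a>0$ it also makes $\psi_a(x,\cdot)$ strictly concave, guaranteeing the unique maximizer used throughout. With convexity in hand, the weak sharpness hypothesis yields linear conditioning of $\psi_a$ with some constant $\gamma>0$ via Theorem \ref{theoremequi}, so Theorem \ref{finite} applies.

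For the quantitative step I would re-run the estimates inside the proof of Theorem \ref{finite} but keep track of constants. Telescoping inequality (\ref{ine}) with the choice $x^*=P_{X^*}(x_\circ)$ gives $\sum_{k\ge 0}\norm{x_{k+1}-x_k}^2\le\norm{x^*-x_\circ}^2=d(x_\circ,X^*)^2$. The key per-step estimate is that whenever $x_j\notin X^*$ one has, combining the proximal inequality (\ref{proximal}) evaluated at $z=P_{X^*}(x_j)$ with Cauchy--Schwarz and linear conditioning, the bound $\norm{x_j-x_{j-1}}\ge\gamma r_{j-1}$; since $1/r_{j-1}<\epsilon$ this forces $\norm{x_j-x_{j-1}}^2>\gamma^2/\epsilon^2$. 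Because any iterate landing in $X^*$ stays there (the choice $x_{k+1}=x_k$ solves (\ref{proximal}) when $x_k\in X^*$, and uniqueness of the proximal point pins it down), the indices $j$ with $x_j\notin X^*$ form an initial block $j=1,\dots,k_\circ$. Summing the per-step bound over these gives $k_\circ\cdot\gamma^2/\epsilon^2<\sum_{j\ge1}\norm{x_j-x_{j-1}}^2\le d(x_\circ,X^*)^2$, whence $k_\circ\le d(x_\circ,X^*)^2\epsilon^2/\gamma^2$ and $x_{k_\circ+1}\in X^*$.

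The part I expect to require the most care is the algebraic identity $\sum_i\theta_i(x_{-i},x_i)=x^TCx$ and the claim that the $y$-substituted sum is affine in $x$, since these depend delicately on the block indexing, on the $\tfrac12$ factors sitting on the diagonal of $C$, and on the symmetry of $A_{ii}$; getting the Hessian to come out as $C+C^T-aI$ rather than some asymmetric variant is the crux that links the eigenvalue threshold $\delta$ to convexity. The remaining work is bookkeeping: confirming that once an iterate enters $X^*$ it cannot leave (so the ``bad'' steps are genuinely consecutive and countable) and that the strict inequality $1/r_k<\epsilon$ feeds correctly into the per-step lower bound. The convergence analysis itself adds nothing new beyond Theorem \ref{finite}; the novelty here is only the explicit constant, obtained by following the argument of \cite{nguyen}.
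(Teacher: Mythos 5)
Your proposal is correct and follows the same skeleton as the paper --- verify convexity of $\psi_a(\cdot,y)$, invoke Theorems \ref{theoremequi} and \ref{finite}, then extract the iteration count from the telescoped inequality (\ref{ine}) --- but it differs in two places worth recording. First, you verify the convexity hypothesis explicitly by computing $\sum_i\theta_i(x_{-i},x_i)=x^{T}Cx$, observing that $x\mapsto\sum_i\theta_i(x_{-i},y_i)$ is affine, and reading off the Hessian $C+C^{T}-aI$, which is positive semidefinite precisely when $a\le\delta$; the paper outsources this to \cite[Proposition 3.1.2]{heusthesis}, so your computation is a welcome justification of the otherwise unexplained hypothesis $a\in(0,\delta]$. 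Second, your counting argument is genuinely different: the paper takes $k_\circ$ to be the least index beyond which $\norm{x_{k+1}-x_k}<\gamma/\epsilon$ and then uses $\norm{x_{v+1}-x_v}\ge\gamma/\epsilon$ for \emph{every} $v<k_\circ$ (which minimality alone does not deliver --- it only forces the single index $v=k_\circ-1$ to violate the threshold), whereas you count the indices $j$ with $x_j\notin X^*$ directly, each of which contributes $\norm{x_j-x_{j-1}}^2>\gamma^2/\epsilon^2$ to the telescoped sum. Your version is the tighter of the two, with one caveat: the claim that these bad indices form an initial block should not rest on ``uniqueness of the proximal point,'' since $\psi_a$ is not monotone (the paper stresses this in the remark after Theorem \ref{theoremequi}) and the subproblem (\ref{proximal}) need not have a unique solution. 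The invariance of $X^*$ that you need follows instead from inequality (\ref{ine}) itself: if $x_k\in X^*$, taking $x^*=x_k$ there yields $2\norm{x_{k+1}-x_k}^2\le 0$, hence $x_{k+1}=x_k$ for any admissible iterate. With that repair your argument is complete.
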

	\begin{proof}
		According to \cite[Proposition 3.1.2]{heusthesis}, one can observe that the regularized NI function $\psi_a(x,y):\mathbb{R}^n\times \mathbb{R}^n\rightarrow \mathbb{R}$ corresponding to the above-mentioned GNEP is convex w.r.t. $x$ for any $y\in X$. Since $X^*$ is assumed to satisfy the weak sharpness property, we observe that $\psi_a$ is linearly conditioned as per Theorem \ref{theoremequi}. Now, according to Theorem \ref{finite} we obtain some $k_\circ \in \mathbb{N}$ satisfying $x_k\in X^*$ for each $k>k_\circ$ where $\{x_k\}$ is a sequence generated by PPA in (\ref{proximal}). 
		
		By following the proof of Theorem \ref{finite}, the reader may observe that $\norm{x_{k+1}-x_k}\rightarrow 0$ as $k\rightarrow\infty$. Hence, we get some least positive $k_\circ \in \mathbb{N}$ satisfying,
		\begin{equation}\label{ineq}
			\norm{x_{k+1}-x_{k}}< \frac{\gamma}{\epsilon} ~\text{for all}~ k\geq k_\circ.
		\end{equation}
		We claim that $x_k\in X^*$ for each $k>k_\circ$. Suppose, on contrary $x_{j}\notin X^*$ for some $j>k_\circ$. Let $P_{X^*}(x_{j})=y_{j}$ then by linear conditioning and (\ref{proximal}) we obtain,
		\begin{align*}
			\gamma \norm{x_{j}-y_{j}}&=\gamma d(x_{j},X^*)\leq \psi_a(x_{j},y_{j})\\
			&\leq  \frac{1}{r_{j}} \langle y_{j}-x_{j}, x_{j}-x_{j-1}\rangle \\
			&< \gamma \norm{y_{j}-x_{j}}.
		\end{align*}
		Clearly, this contradiction yields $x_k\in X^*$ for any $k>k_\circ$. 
		
		Now, one can easily obtain following from inequality (\ref{ine}),
		\begin{align*}
			\norm{x^*-x_\circ}^2 
			&\geq \norm{x^*-x_1}^2+\norm{x_1-x_\circ}^2\\
			&\geq \cdots\\
			&\geq \norm{x^*-x_{k_\circ}}^2+ \sum_{v=0}^{k_\circ-1} \norm{x_{v+1}-x_v}^2\\
			&\geq \sum_{v=0}^{k_\circ-1} \norm{x_{v+1}-x_v}^2
		\end{align*}
		Consequently, we get following by employing inequality (\ref{ineq}), $$d(x_\circ, X^*)^2=\inf_{x^*\in X^*} \norm{x^*-x_\circ}^2\geq \displaystyle\sum_{v=0}^{k_\circ-1} \norm{x_{v+1}-x_v}^2 \geq \frac{(k_\circ) \gamma^2}{\epsilon^2},$$ 
		which gives us the required estimate,
		$$ k_\circ \leq \frac{d(x_\circ, X^*)^2 \epsilon^2}{\gamma^2}.$$ 
	\end{proof}
	\vskip 6mm
	\noindent{\bf Acknowledgements}
	
	\noindent The first author acknowledges Science and Engineering Research Board (SERB), India for the financial support under $(\rm{MTR/2021/000164})$. The second author is grateful to the University Grants Commission (UGC), New Delhi, India for the financial assistance provided by them throughout this research work under the grant $(\rm{1313/(CSIRNETJUNE2019)})$.
	
\end{document}